\newtheorem{theorem}{Theorem}
\newtheorem{definition}{Definition}
\newtheorem{example}{Example}
\newtheorem{lemma}{Lemma}
\newcommand{\Bern}[2]{\ensuremath{\mathcal{B}^{#1}_{#2}}\xspace}
\begin{document}
%
% --- Author Metadata here ---
%\conferenceinfo{WOODSTOCK}{'97 El Paso, Texas USA}
%\CopyrightYear{2007} % Allows default copyright year (20XX) to be over-ridden - IF NEED BE.
%\crdata{0-12345-67-8/90/01}  % Allows default copyright data (0-89791-88-6/97/05) to be over-ridden - IF NEED BE.
% --- End of Author Metadata ---

\title{Nonnegative Polynomial with no Certificate of Nonnegativity in the Simplicial Bernstein Basis}
%\subtitle{[Extended Abstract]
%\titlenote{A full version of this paper is available as
%\textit{Author's Guide to Preparing ACM SIG Proceedings Using
%\LaTeX$2_\epsilon$\ and BibTeX} at
%\texttt{www.acm.org/eaddress.htm}}}
%
% You need the command \numberofauthors to handle the 'placement
% and alignment' of the authors beneath the title.
%
% For aesthetic reasons, we recommend 'three authors at a time'
% i.e. three 'name/affiliation blocks' be placed beneath the title.
%
% NOTE: You are NOT restricted in how many 'rows' of
% "name/affiliations" may appear. We just ask that you restrict
% the number of 'columns' to three.
%
% Because of the available 'opening page real-estate'
% we ask you to refrain from putting more than six authors
% (two rows with three columns) beneath the article title.
% More than six makes the first-page appear very cluttered indeed.
%
% Use the \alignauthor commands to handle the names
% and affiliations for an 'aesthetic maximum' of six authors.
% Add names, affiliations, addresses for
% the seventh etc. author(s) as the argument for the
% \additionalauthors command.
% These 'additional authors' will be output/set for you
% without further effort on your part as the last section in
% the body of your article BEFORE References or any Appendices.

%\numberofauthors{3} %  in this sample file, there are a *total*
% of EIGHT authors. SIX appear on the 'first-page' (for formatting
% reasons) and the remaining two appear in the \additionalauthors section.
%
%\titlenote{Dr.~Trovato insisted his name be first.}
\author{
%\alignauthor
Christoffer Sloth\\
        \texttt{University of Southern Denmark}\\
       \texttt{SDU Robotics}\\
       \texttt{chsl@mmmi.sdu.dk}
%\alignauthor
%Rafael Wisniewski\\
%        \affaddr{Aalborg University}\\
%       \affaddr{Automation \& Control}\\
%       \email{raf@es.aau.dk}
%\alignauthor Marie-Fran\c{c}oise Roy\\
%       \affaddr{Universit\'{e} de Rennes 1}\\
%       \affaddr{IRMAR}\\
%       \email{Marie-Francoise.Roy@univ-rennes1.fr}
}
\date{}
% There's nothing stopping you putting the seventh, eighth, etc.
% author on the opening page (as the 'third row') but we ask,
% for aesthetic reasons that you place these 'additional authors'
% in the \additional authors block, viz.
%\additionalauthors{Additional authors: John Smith (The Th{\o}rv{\"a}ld Group,
%email: {\texttt{jsmith@affiliation.org}}) and Julius P.~Kumquat
%(The Kumquat Consortium, email: {\texttt{jpkumquat@consortium.net}}).}
%\date{30 July 1999}
% Just remember to make sure that the TOTAL number of authors
% is the number that will appear on the first page PLUS the
% number that will appear in the \additionalauthors section.

\maketitle
\begin{abstract}
This paper presents a nonnegative polynomial that cannot be represented with nonnegative coefficients in the simplicial Bernstein basis by subdividing the standard simplex. The example shows that Bernstein Theorem cannot be extended to certificates of nonnegativity for polynomials with zeros at isolated points.
\end{abstract}
\section{Introduction}
%\subsection{Why is the topic of interest?}
The positivity and nonnegativity of polynomials play an important role in many problems such as Lyapunov stability analysis of dynamical systems \cite{PositivePolynomialsInControl,Positive_Polynomials_and_Sums_of_Squares}. Therefore, much research has been conducted to exploit different certificates of positivity \cite{springerlink:10.1007/s10107-003-0387-5,6814141}. The certificates of positivity include sum of squares that is extensively used in different applications \cite{PositivePolynomialsInControl,powers2011positive}.

%\subsection{What is the background on previous solutions?}
The certification of nonnegativity for polynomials with zeros was addressed in \cite{DBLP:journals/jsc/CastlePR11,Schweighofer2005}, where the paper \cite{DBLP:journals/jsc/CastlePR11} characterizes polynomials that can be certified to be nonnegative using P\'{o}lya's Theorem.

This paper exploits a certificate of positivity in the Bernstein basis \cite{Farouki2012379} that has previously been applied within optimization and control \cite{s10817-012-9256-3,ReachabilityBernstein,Sloth:2014:CFP:2562059.2562132}. In particular, a nonnegative polynomial is provided that cannot be represented with nonnegative coefficients in the simplicial Bernstein basis by subdividing the standard simplex.

%\subsection{What is the background on potential solutions?}

%\subsection{What was attempted in the present effort?}

%\subsection{What will be presented in this paper?}
The paper is organized as follows. Section~\ref{sec:preliminaries} provides preliminaries on polynomials in Bernstein form, which are used in Section~\ref{sec:counterExample} that provides the example, and relies on results presented in Appendix~\ref{app:usefulResults}. %Finally, Section~\ref{sec:conclusions} provides conclusions.

\section{Preliminaries}\label{sec:preliminaries}
This section provides preliminary results and notation on polynomials in the Bernstein basis.

Let $\mathds{R}[X]$ be the real polynomial ring in $n$ variables. For $\alpha=(\alpha_0,\dots,\alpha_n)\in\mathds{N}^{n+1}$ we define $|\alpha|:=\sum_{i=0}^{n}\alpha_i$.

We utilize Bernstein polynomials defined on simplices via barycentric coordinates. %Bernstein polynomials can alternatively be defined over boxes resulting in a similar construction \cite{Berchtold2000681}.
\begin{definition}[Barycentric Coordinates]\label{def:BarycentricCoordinates} Let $\lambda_0,\dots,\lambda_n\in\mathds{R}[X]$ be affine polynomials and let $v_0,\dots,v_n\in\mathds{R}^n$ be affinely independent. If
\begin{align*}
&\sum_{i=0}^{n}\lambda_i=1~~~\text{and}\\
&x=\lambda_0(x)v_0+\cdots+\lambda_n(x)v_n~~~\forall x\in\mathds{R}^n
\end{align*}
then $\lambda_0,\dots,\lambda_n$ are said to be barycentric coordinates associated to $v_0,\dots,v_n$.
\end{definition}

Let $\lambda_0,\dots,\lambda_n$ be barycentric coordinates associated to $v_0,\dots,v_n$. %\equiv(0,e_0,\dots,e_{n-1})$. The standard $n$-simplex is
Then the simplex generated by $\lambda_0,\dots,\lambda_n$ is
\begin{align}
\triangle := \{x = \sum_{i=0}^n \lambda_i v_i | ~ \lambda_i(x) \geq 0, i=0,\dots,n\} \subset \mathds R^{n}.\label{eqn:simplexDef}
\end{align}

This work represents polynomials in the simplicial Bernstein basis, where the basis polynomials are defined as follows.
\begin{definition}[Bernstein Polynomial]\label{def:BernsteinPoly}
Let $d\in\mathds{N}_0$, $n\in\mathds{N}$, $\alpha=(\alpha_0,\dots,\alpha_n)\in \mathds{N}_0^{n+1}$, and let $\lambda_0,\dots,\lambda_n\in\mathds{R}[X]$ be barycentric coordinates associated to a simplex $\Delta$. The Bernstein polynomials of degree $d$ on the simplex $\Delta$ are
\begin{align}
\Bern{d}{\alpha}&=
\begin{pmatrix}
d\\
\alpha
\end{pmatrix}\lambda^\alpha\label{eqn:bernPoly}%\indent\text{with}\indent
\end{align}
for $|\alpha|=d$ where
\begin{align*}
\begin{pmatrix}
d\\
\alpha
\end{pmatrix}&=\frac{d!}{\alpha_0!\alpha_1!\cdots\alpha_n!}~~~\text{and}~~~\lambda^\alpha:=\prod_{i=0}^n\lambda_i^{\alpha_i}.
\end{align*}
\end{definition}
%\begin{proposition}\label{prop:BPsFormBasis}
%The Bernstein polynomials of degree $d\in\mathds{N}_0$ on the simplex $\Delta$ form a basis of the vector space of polynomials of degree at most $d$.
%\end{proposition}
%Proposition~\ref{prop:BPsFormBasis} says that
Every polynomial $P\in\mathds{R}[X]$ of degree no greater than $d$ can be written uniquely as
\begin{align}
P=\sum_{|\alpha|=d}b_{\alpha}(P,d,\triangle)\Bern{d}{\alpha}=\sum_{|\alpha|=d}b_{\alpha}^d\Bern{d}{\alpha}=b^d\Bern{d}{ },\label{eqn:f_in_BernsteinForm}
\end{align}
where $b_{\alpha}(P,d,\triangle)\in\mathds{R}$ is a Bernstein coefficient of $P$ of degree $d$ with respect to $\triangle$. We say that \eqref{eqn:f_in_BernsteinForm} is a \emph{polynomial in Bernstein form}.

From the definition of Bernstein polynomials on simplex $\triangle$ (Definition~\ref{def:BernsteinPoly}) and \eqref{eqn:simplexDef} it is seen that for any $d\in\mathds{N}$ and $|\alpha|=d$
\[\Bern{d}{\alpha}(x)\geq0\indent\forall x\in\triangle.\]
This means that polynomial $P$ in \eqref{eqn:f_in_BernsteinForm} is positive on $\triangle$ if $b_{\alpha}^d>0$ for all $\alpha\in \mathds{N}^{n+1}$ where $|\alpha|=d$. We call this a \emph{certificate of positivity}. Similarly, we say that $b_{\alpha}^d\geq0$ for all $\alpha\in \mathds{N}^{n+1}$ where $|\alpha|=d$ is a \emph{certificate of nonnegativity}.

If a polynomial $P$ of degree $p$ cannot be certified to be positive in the Bernstein basis of degree $p$ on simplex $\triangle$ then two strategies may be exploited to refine the certificate. These are called degree elevation and subdivision and will be outlined next.

To certify the positivity of a polynomial by degree elevation, one may represent the polynomial using Bernstein polynomials of a higher degree as in the following theorem \cite{Richard_Leroy}.
\begin{theorem}[Multivariate Bernstein Theorem]\label{thm:BernsteinMulti}
If a nonzero polynomial $P\in\mathds{R}[X_1,\dots,X_n]$ of degree $p\in\mathds{N}_0$ is positive on a simplex $\triangle$, then there exists $d\geq p$ such that all entries of $b(P,d,\triangle)$ are positive.
\end{theorem}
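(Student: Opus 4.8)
The plan is to prove the Multivariate Bernstein Theorem by exploiting the fact that the Bernstein coefficients of a polynomial of degree $p$, computed at degree $d \geq p$, converge uniformly to the values of the polynomial itself as $d \to \infty$, with a convergence rate that is $O(1/d)$. More precisely, I would first recall (or establish) the \emph{degree elevation formula}: if $P = \sum_{|\alpha|=p} b_\alpha^p \Bern{p}{\alpha}$, then its coefficients $b_\beta^d$ at degree $d > p$ are obtained as convex combinations (with positive weights) of the $b_\alpha^p$ for indices $\alpha \leq \beta$ componentwise. Since the barycentric coordinates $\lambda_i$ evaluated at a point of $\triangle$ lie in $[0,1]$ and sum to $1$, one can think of $\alpha/d$ as running over a grid in the standard simplex, and the coefficient $b_\beta^d$ as a weighted average of $P$-related quantities near the grid point $\beta/d$.

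The key quantitative step I would carry out is the estimate
\begin{align*}
\left| b_\beta^d\bigl(P,d,\triangle\bigr) - P\!\left(\sum_{i=0}^n \frac{\beta_i}{d} v_i\right) \right| \leq \frac{C(P)}{d}
\end{align*}
for a constant $C(P)$ depending only on $P$ (through, say, bounds on its second derivatives) and on the simplex $\triangle$, uniformly over all $\beta$ with $|\beta| = d$. This is the classical "Bernstein operator approximation" bound adapted to the simplicial/barycentric setting; one derives it by writing $b_\beta^d$ explicitly via the degree-elevation weights, using that these weights form a probability distribution concentrated (with variance $O(1/d)$) around $\beta/d$, and then Taylor-expanding $P$ to second order about $\beta/d$. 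I would likely push the routine multinomial moment computations into the appendix referenced in the excerpt ("relies on results presented in Appendix~\ref{app:usefulResults}").

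With this bound in hand, the theorem follows quickly. Since $P$ is positive and continuous on the compact simplex $\triangle$, it attains a positive minimum $m := \min_{x \in \triangle} P(x) > 0$. Each grid point $\sum_i (\beta_i/d) v_i$ lies in $\triangle$, so $P$ evaluated there is at least $m$. Choosing $d \geq p$ large enough that $C(P)/d < m$, the estimate gives
\begin{align*}
b_\beta^d\bigl(P,d,\triangle\bigr) \geq P\!\left(\sum_{i=0}^n \frac{\beta_i}{d} v_i\right) - \frac{C(P)}{d} \geq m - \frac{C(P)}{d} > 0
\end{align*}
for every $\beta$ with $|\beta| = d$, which is exactly the claim. (The nonzero hypothesis on $P$ is what guarantees $m > 0$ rather than merely $m \geq 0$; a degenerate edge case where $P$ vanishes identically would be excluded at the outset.)

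The main obstacle I anticipate is establishing the $O(1/d)$ approximation bound cleanly in barycentric coordinates: one must set up the degree-elevation recursion correctly, identify the elevation weights with a hypergeometric/multivariate-hypergeometric probability distribution on subindices, and compute its first and second moments to control the Taylor remainder. None of this is deep, but it is the technically substantive part; once the uniform bound is available, the compactness argument is immediate. An alternative route, which I would keep in mind as a fallback, is to avoid explicit rate estimates and instead argue by uniform convergence of the Bernstein approximants together with a compactness argument on $\triangle$, invoking a known convergence theorem for the multivariate Bernstein operator — but the quantitative version above is more self-contained and gives an explicit $d$.
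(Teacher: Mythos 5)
The paper itself gives no proof of Theorem~\ref{thm:BernsteinMulti}: it is quoted from the cited reference \cite{Richard_Leroy}, whose proof indeed runs along the lines you propose — a uniform $O(1/d)$ bound comparing the degree-$d$ Bernstein coefficients of $P$ with the values of $P$ at the associated grid points, combined with the positive minimum of $P$ on the compact simplex. So your overall architecture is the standard one and the final step (choose $d$ with $C(P)/d<m$) is fine.

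However, the mechanism you sketch for the key estimate does not work as stated. Degree elevation gives $b_\beta^d=\sum_{|\alpha|=p,\,\alpha\le\beta}w_{\beta,\alpha}\,b_\alpha^p$ with $w_{\beta,\alpha}=\prod_i\binom{\beta_i}{\alpha_i}\big/\binom{d}{p}$, i.e.\ the weights form a multivariate hypergeometric distribution on the degree-$p$ index set, and the quantities being averaged are the degree-$p$ coefficients $b_\alpha^p$, not values of $P$; so ``Taylor-expanding $P$ to second order about $\beta/d$'' has no direct purchase. Moreover the claimed concentration is false: the normalized index $\alpha/p$ has mean $\beta/d$ but variance of order $1/p$, not $1/d$, so these weights do not concentrate as $d\to\infty$ with $p$ fixed. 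The standard repair is to observe that $P(x_\beta)=\sum_{|\alpha|=p}\binom{p}{\alpha}(\beta/d)^\alpha b_\alpha^p$, i.e.\ the same coefficients averaged against multinomial weights with cell probabilities $\beta_i/d$, and that the hypergeometric and multinomial weight systems (sampling without vs.\ with replacement of $p$ items from $d$) differ in total variation by at most $\binom{p}{2}/d$; this yields $|b_\beta^d-P(x_\beta)|\le p(p-1)\,d^{-1}\max_{|\alpha|=p}|b_\alpha^p|$ uniformly in $\beta$, which is the bound you need (equivalently, one can bound the gap by second differences of the degree-$p$ control net, as in \cite{Richard_Leroy}). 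With that estimate in place the rest of your argument goes through verbatim. A minor point: the ``nonzero'' hypothesis is not what gives $m>0$ — positivity on the compact simplex already does.
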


A similar result related to subdivision is presented in \cite{Richard_Leroy}; however, for simplicity the subdivision approach is explained by the following example. The example shows how an initial simplex $\triangle$ may be subdivided into several simplices to certify the positivity of a polynomial on a given domain.
\begin{example}\label{ex:Sub2}
Consider the following polynomial
\[
P=X^2+Y^2-XY
\]
on the standard simplex $\triangle$ with vertices $v_0=(0,0)$, $v_1=(1,0)$, $v_2=(0,1)$. Polynomial $P$ has the following nonzero Bernstein coefficients in the Bernstein basis with respect to $\triangle$
\begin{align}
b_{(0,2,0)}&=1,\indent b_{(0,1,1)}=-0.5,\indent b_{(0,0,2)}=1.
\end{align}
The nonnegativity of polynomial $P$ cannot be certified directly by the above Bernstein coefficients since $b_{(0,1,1)}$ is negative; hence, simplex $\triangle$ must be subdivided to obtain a certificate of nonnegativity. Consequently, we define a new vertex $\tilde{v}_1=(1-\theta,\theta)$ and a subdivision of $\triangle$ given by two simplices $\tilde{\triangle}$ and $\hat{\triangle}$ with vertices $v_0,v_1,\tilde{v}_1$ and $v_0,v_2,\tilde{v}_1$ respectively. The subdivision is shown in Figure~\ref{fig:subdiv2} and gives a collection of simplices, all with a common vertex at the zero of $P$.
\begin{figure}[ht]
\centering
  %\def\svgwidth{\columnwidth}
%\graphicspath{{figures/}}
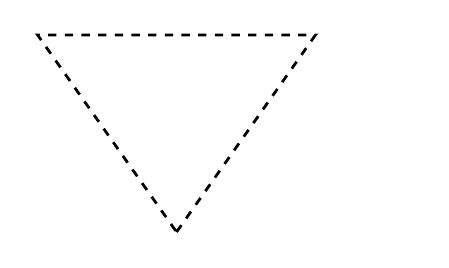
\caption{Simplex $\triangle$ with vertices $v_0,v_1,v_2$, simplex $\tilde{\triangle}$ with vertices $v_0,v_1,\tilde{v}_1$, and simplex $\hat{\triangle}$ with vertices $v_0,v_2,\tilde{v}_1$.}
\label{fig:subdiv2}
\end{figure}

The nonzero Bernstein coefficients of polynomial $P$ on the two simplices are (by Lemma~\ref{lem:sub2})
\begin{align*}
\tilde{b}_{(0,2,0)}&=1,\indent \tilde{b}_{(0,1,1)}=-1.5\theta+1,\indent \tilde{b}_{(0,0,2)}=\frac{1}{8}(1-\theta)\theta+\frac{1}{4}\\
\hat{b}_{(0,2,0)}&=1,\indent \hat{b}_{(0,1,1)}=1.5\theta-0.5,\indent \hat{b}_{(0,0,2)}=\frac{1}{8}(1-\theta)\theta+\frac{1}{4}
\end{align*}
It is seen that the nonnegativity of the polynomial can be certified by letting $\theta=0.5$.
\end{example}
It should be noted that there exists nonnegative (sum of squares) polynomials that cannot be represented with nonnegative coefficients in the simplicial Bernstein basis, i.e., it is not possible to certify the nonnegativity of any nonnegative polynomial by employing subdivisioning. This is exemplified in the next section.

\section{Counterexample}\label{sec:counterExample}
In this section, a nonnegative polynomial is presented for which there exists no certificate of nonnegativity in the simplicial Bernstein basis.

Consider the polynomial
{\small
\[
P=21X_{1}^4 + 24X_{1}^3X_{2} - 36X_{1}^3 + 18X_{1}^2X_{2}^2 - 24X_{1}^2X_{2} + 18X_{1}^2 + 12X_{1}X_{2}^3 - 12X_{1}X_{2}^2 + 30X_{2}^4.
\]}
The graph of $P$ on the standard simplex is shown in Figure~\ref{fig:counterExampleGraph}.
\begin{figure}[!htb]
    \centering
       \includegraphics[scale=0.6]{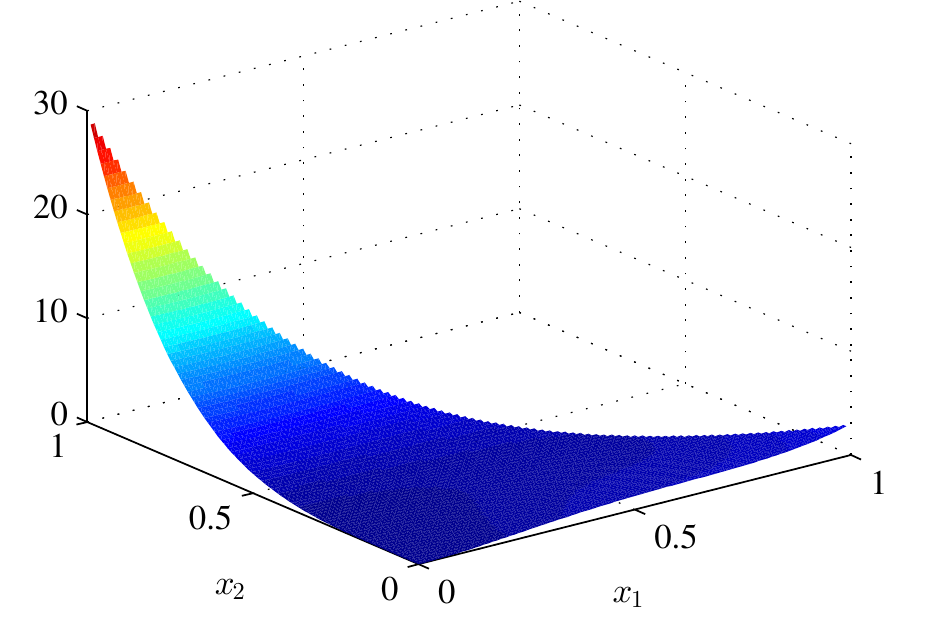}
    \caption{Graph of the polynomial $P$ on the standard simplex $\triangle$. \label{fig:counterExampleGraph}}
\end{figure}

We aim at proving the nonnegativity of $P$ on the standard simplex $\triangle$ with vertices $v_0=(0,0)$, $v_1=(1,0)$, and $v_2=(0,1)$. The polynomial $P$ can be written as the following sum of squares
\[
P=\begin{bmatrix}X_1&X_1^2&X_1X_2&X_2^2\end{bmatrix}\underbrace{\begin{bmatrix}18 &-18 & -12  & -6\\
%      0  &  0   & 0  &  0\\
    -18 &21  & 12  &  0\\
    -12& 12 &  18  &  6\\
     -6 &  0  &  6  & 30\end{bmatrix}}_{=M_P}\begin{bmatrix}X_1\\X_1^2\\X_1X_2\\X_2^2\end{bmatrix},
\]
where the matrix $M_P$ is positive definite; hence, the polynomial is positive everywhere except from being zero at $(x_1,x_2)=(0,0)$.

Polynomial $P$ can be represented in the Bernstein basis as
\begin{align*}
P&=\sum_{|\alpha|=d}b_\alpha\mathcal{B}_\alpha^d,
\end{align*}
where $\mathcal{B}_\alpha^d$ is the Bernstein basis defined on the standard simplex $\triangle$. The nonzero coefficients of $P$ are
\begin{align*}
b_{(2,2,0)}=3,\indent b_{(1,2,1)}=1,\indent b_{(1,1,2)}=-1,\indent b_{(0,4,0)}=3,\indent b_{(0,0,4)}=30.
\end{align*}
It is seen that one coefficient is negative ($b_{(1,1,2)}<0$); thus, the nonnegativity of $P$ cannot be directly certified.

Next we show that there exists no certificate of nonnegativity for the polynomial $P$ in the Bernstein basis, since the coefficient with index $(1,1,2)$ will remain negative on one simplex of any triangulation of $\triangle$. In particular, the coefficient remains negative on any nondegenerate simplex with vertex $v_0$, a vertex on the line between $v_0$ and $v_2$, and any third vertex in $\triangle$, i.e., for any Bernstein basis defined on the simplex $\tilde{\triangle}$ with vertices
\begin{align*}
\tilde{v}_0&=v_0\\
\tilde{v}_1&=\beta_0v_0+\beta_1v_1+\beta_2v_2\\
\tilde{v}_2&=\rho v_0+(1-\rho) v_2
\end{align*}
where $\beta_0+\beta_1+\beta_2=1$, $\beta_0,\beta_1,\beta_2\geq0$, $\beta_1>0$, and $\rho\in[0,1)$. %; thus, there is no subdivision of $\triangle$ that can certify the nonnegativity of $P$.
The simplices $\triangle$ and $\tilde{\triangle}$ are illustrated in Figure~\ref{fig:cutCorner4}.
\begin{figure}[ht]
\centering
  \def\svgwidth{\columnwidth}
%\graphicspath{{figures/}}
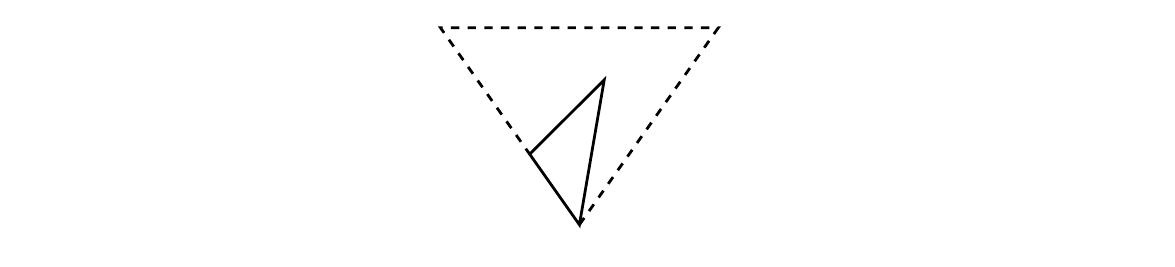
\caption{Simplex $\triangle$ with vertices $v_0,v_1,v_2$ and simplex $\tilde{\triangle}$ with vertices $v_0,\tilde{v}_1,\tilde{v}_2$.}
\label{fig:cutCorner4}
\end{figure}

The polynomial $P$ can be represented in the Bernstein basis as
\begin{align*}
P&=\sum_{|\alpha|=d}\tilde{b}_\alpha\tilde{\mathcal{B}}_\alpha^d,
\end{align*}
where $\tilde{\mathcal{B}}_\alpha^d$ is the Bernstein basis defined on the standard simplex $\tilde{\triangle}$ and where by Lemma~\ref{lem:sub1} and Lemma~\ref{lem:sub2} the coefficients $b_\alpha$ and $\tilde{b}_\gamma$ are related as
{\small
\[
\tilde{b}_{\gamma}=\sum_{k=0}^{\gamma_2}\begin{pmatrix}\gamma_2\\\gamma_2-k\end{pmatrix}\rho^{\gamma_2-k}(1-\rho)^{k}\sum_{\begin{matrix}|\alpha|=d\\\gamma_0+\gamma_2-k\leq \alpha_0\\
k\leq\alpha_2\end{matrix}}\frac{\gamma_1!\beta_0^{\alpha_0-(\gamma_0+\gamma_2-k)}\beta_1^{\alpha_1}\beta_2^{\alpha_2-k}}{(\alpha_0-(\gamma_0+\gamma_2-k))!\alpha_1!(\alpha_2-k)!}b_\alpha
\]}

For the polynomial $P$, it is seen that
\[
\tilde{b}_{(1,1,2)}=\beta_1(1-\rho)^2b_{(1,1,2)},
\]
which is negative for any admissible $\beta_1$ and $\rho$ ($\beta_1>0$ and $\rho\in[0,1)$), since $b_{(1,1,2)}=-1$. Therefore, there exists no certificate of nonnegativity of $P$ on $\triangle$ in the simplicial Bernstein basis.

%\section*{Acknowledgment}
%This work is supported by the Danish Council for Independent Research under grant number DFF - 4005-00452 in the project \emph{CodeMe}.

\bibliographystyle{abbrv}
\bibliography{bibliography}  % sigproc.bib is the name of the Bibliography in this case
\appendix
\section{Useful Results}\label{app:usefulResults}
\begin{lemma}\label{lem:sub2}
Let $P\in\mathds{R}[X]$. Define a simplex $\triangle$ with vertices $v_0,v_1,v_2\in\mathds{R}^2$, and define the simplex $\tilde{\triangle}$ with vertices $v_0,v_1,\tilde{v}_2$, where
\[
\tilde{v}_2=\rho v_0+(1-\rho)v_2
\]
and $\rho\in[0,1)$. Then $P$ can be represented in two Bernstein bases as
\begin{align}
P&=\sum_{|\alpha|=d}b_\alpha\mathcal{B}_\alpha^d=\sum_{|\gamma|=d}\tilde{b}_\gamma\tilde{\mathcal{B}}_\gamma^d,
\end{align}
where $\mathcal{B}_\alpha^d$ is the Bernstein basis defined by $\triangle$, $\tilde{\mathcal{B}}_\gamma^d$ is the Bernstein basis defined by $\tilde{\triangle}$, and
\begin{align*}
\tilde{b}_\gamma&=\sum_{k=0}^{\gamma_2}\begin{pmatrix}\gamma_2\\\gamma_2-k\end{pmatrix}\rho^{\gamma_2-k}(1-\rho)^{k}b_{(\gamma_0+\gamma_2-k,\gamma_1,k)}.
\end{align*}
\end{lemma}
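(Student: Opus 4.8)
The plan is to reduce the claim to a linear change of barycentric coordinates followed by a binomial expansion and a comparison of coefficients, exploiting the uniqueness of the Bernstein representation recorded in \eqref{eqn:f_in_BernsteinForm}. Write $\lambda_0,\lambda_1,\lambda_2$ for the barycentric coordinates associated to $v_0,v_1,v_2$ and $\tilde\lambda_0,\tilde\lambda_1,\tilde\lambda_2$ for those associated to $v_0,v_1,\tilde v_2$, so that $\mathcal{B}_\alpha^d=\binom{d}{\alpha}\lambda^\alpha$ and $\tilde{\mathcal{B}}_\gamma^d=\binom{d}{\gamma}\tilde\lambda^\gamma$. Since $\rho\in[0,1)$, the defining relation $\tilde v_2=\rho v_0+(1-\rho)v_2$ inverts to $v_2=\frac{1}{1-\rho}\tilde v_2-\frac{\rho}{1-\rho}v_0$, and $v_0,v_1,\tilde v_2$ are affinely independent (since $v_0,v_1,v_2$ are and $\rho\neq1$), so the barycentric coordinates $\tilde\lambda_i$ are well defined. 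Substituting this expression for $v_2$ into $x=\lambda_0v_0+\lambda_1v_1+\lambda_2v_2$ and identifying the result with $x=\tilde\lambda_0v_0+\tilde\lambda_1v_1+\tilde\lambda_2\tilde v_2$ (using the affine independence of $v_0,v_1,\tilde v_2$) yields
\begin{align*}
\lambda_0=\tilde\lambda_0+\rho\,\tilde\lambda_2,\qquad\lambda_1=\tilde\lambda_1,\qquad\lambda_2=(1-\rho)\,\tilde\lambda_2,
\end{align*}
and one verifies that $\lambda_0+\lambda_1+\lambda_2=\tilde\lambda_0+\tilde\lambda_1+\tilde\lambda_2=1$, as required of barycentric coordinates.

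Next I would insert these relations into $P=\sum_{|\alpha|=d}b_\alpha\binom{d}{\alpha}\lambda_0^{\alpha_0}\lambda_1^{\alpha_1}\lambda_2^{\alpha_2}$ and expand $(\tilde\lambda_0+\rho\tilde\lambda_2)^{\alpha_0}$ by the binomial theorem with a summation index $j$ running from $0$ to $\alpha_0$; the generic term then carries $(1-\rho)^{\alpha_2}\rho^{j}$ times the monomial $\tilde\lambda_0^{\alpha_0-j}\tilde\lambda_1^{\alpha_1}\tilde\lambda_2^{\alpha_2+j}$, of total degree $d$. To read off the coefficient of $\tilde\lambda_0^{\gamma_0}\tilde\lambda_1^{\gamma_1}\tilde\lambda_2^{\gamma_2}$ I would substitute $k:=\alpha_2$, so that $j=\gamma_2-k$ and $\alpha=(\gamma_0+\gamma_2-k,\gamma_1,k)$, and check that the four constraints $j\ge0$, $j\le\alpha_0$, $\alpha_0\ge0$, $\alpha_2\ge0$ collapse to $0\le k\le\gamma_2$. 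Collecting the contributions shows that the coefficient of $\tilde\lambda^\gamma$ in $P$ equals $\sum_{k=0}^{\gamma_2}\binom{d}{(\gamma_0+\gamma_2-k,\gamma_1,k)}\binom{\gamma_0+\gamma_2-k}{\gamma_2-k}\rho^{\gamma_2-k}(1-\rho)^{k}b_{(\gamma_0+\gamma_2-k,\gamma_1,k)}$.

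Finally, since also $P=\sum_{|\gamma|=d}\tilde b_\gamma\binom{d}{\gamma}\tilde\lambda^\gamma$ and the Bernstein representation with respect to $\tilde\triangle$ is unique by \eqref{eqn:f_in_BernsteinForm}, this coefficient must equal $\tilde b_\gamma\binom{d}{\gamma}$. Dividing, and using $\binom{d}{(\gamma_0+\gamma_2-k,\gamma_1,k)}\binom{\gamma_0+\gamma_2-k}{\gamma_2-k}=\frac{d!}{\gamma_0!\gamma_1!k!(\gamma_2-k)!}$ together with $\binom{d}{\gamma}=\frac{d!}{\gamma_0!\gamma_1!\gamma_2!}$, the quotient collapses to $\binom{\gamma_2}{\gamma_2-k}$, which is exactly the claimed expression. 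The argument is essentially bookkeeping; the one step needing care is the reindexing above---correctly identifying $j=\gamma_2-\alpha_2$ and pinning down that $\alpha=(\gamma_0+\gamma_2-k,\gamma_1,k)$ is an admissible multi-index precisely for $0\le k\le\gamma_2$---after which the multinomial-coefficient simplification is routine.
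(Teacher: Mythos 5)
Your proposal is correct and follows essentially the same route as the paper: the same change of barycentric coordinates $\lambda_0=\tilde\lambda_0+\rho\tilde\lambda_2$, $\lambda_1=\tilde\lambda_1$, $\lambda_2=(1-\rho)\tilde\lambda_2$, a binomial expansion of $(\tilde\lambda_0+\rho\tilde\lambda_2)^{\alpha_0}$, and a reindexing/coefficient comparison; the only cosmetic difference is that you compare coefficients of the monomials $\tilde\lambda^\gamma$ and simplify the multinomial factors at the end, whereas the paper absorbs them earlier by rewriting $\mathcal{B}_\alpha^d$ directly in terms of $\tilde{\mathcal{B}}_{(\alpha_0-k,\alpha_1,\alpha_2+k)}^d$. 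Your added checks (affine independence of $v_0,v_1,\tilde v_2$ and uniqueness of the representation) are correct and harmless refinements of the same argument.
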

\begin{proof}
Let $\lambda_0,\lambda_1,\lambda_2\in\mathds{R}[X]$ be barycentric coordinates associated to $\triangle$ and $\tilde{\lambda}_0,\tilde{\lambda}_1,\tilde{\lambda}_2\in\mathds{R}[X]$ be barycentric coordinates associated to $\tilde{\triangle}$. Then
\begin{align*}
\lambda_0&=\tilde{\lambda}_0+\rho\tilde{\lambda}_2\\
\lambda_1&=\tilde{\lambda}_1\\
\lambda_2&=(1-\rho)\tilde{\lambda}_2
\end{align*}
and
\begin{align*}
\lambda^\alpha&=(\tilde{\lambda}_0+\rho\tilde{\lambda}_2)^{\alpha_0}\tilde{\lambda}_1^{\alpha_1}(1-\rho)^{\alpha_2}\tilde{\lambda}_2^{\alpha_2}.
\end{align*}
By binomial theorem
\begin{align*}
\lambda^\alpha&=\left(\sum_{k=0}^{\alpha_0}\begin{pmatrix}\alpha_0\\k\end{pmatrix}\tilde{\lambda}_0^{\alpha_0-k}\rho^k\tilde{\lambda}_2^k\right)\tilde{\lambda}_1^{\alpha_1}(1-\rho)^{\alpha_2}\tilde{\lambda}_2^{\alpha_2}\\
&=\sum_{k=0}^{\alpha_0}\begin{pmatrix}\alpha_0\\k\end{pmatrix}\rho^k(1-\rho)^{\alpha_2}\tilde{\lambda}_0^{\alpha_0-k}\tilde{\lambda}_1^{\alpha_1}\tilde{\lambda}_2^{\alpha_2+k}.
\end{align*}
Therefore, the Bernstein bases $\mathcal{B}_\gamma^d$ and $\tilde{\mathcal{B}}_\gamma^d$ are related as
\begin{align*}
\mathcal{B}_\alpha^d&=\sum_{k=0}^{\alpha_0}\begin{pmatrix}\alpha_2+k\\k\end{pmatrix}\rho^k(1-\rho)^{\alpha_2}\tilde{\mathcal{B}}_{(\alpha_0-k,\alpha_1,\alpha_2+k)}^d,
\end{align*}
which implies that coefficients $b_\alpha$ and $\tilde{b}_\alpha$ are related as
\begin{align*}
\tilde{b}_\gamma&=\sum_{k=0}^{\gamma_2}\begin{pmatrix}\gamma_2\\\gamma_2-k\end{pmatrix}\rho^{\gamma_2-k}(1-\rho)^{k}b_{(\gamma_0+\gamma_2-k,\gamma_1,k)}.
\end{align*}
\end{proof}
%%%%%%%%%%%%%%%%%%%%%%%%%%%%%%%%%%%%%%%%%%%%%%%%%%%%%%%%%%%%%%%%%%%%%%%%%%%%%%%%%%%%%%%%%%%%%%%%%%%%%%%%%%%%%%%%%%
%%%%%%%%%%%%%%%%%%%%%%%%%%%%%%%%%%%%%%%%%%%%%%%%%%%%%%%%%%%%%%%%%%%%%%%%%%%%%%%%%%%%%%%%%%%%%%%%%%%%%%%%%%%%%%%%%%
%%%%%%%%%%%%%%%%%%%%%%%%%%%%%%%%%%%%%%%%%%%%%%%%%%%%%%%%%%%%%%%%%%%%%%%%%%%%%%%%%%%%%%%%%%%%%%%%%%%%%%%%%%%%%%%%%%
%%%%%%%%%%%%%%%%%%%%%%%%%%%%%%%%%%%%%%%%%%%%%%%%%%%%%%%%%%%%%%%%%%%%%%%%%%%%%%%%%%%%%%%%%%%%%%%%%%%%%%%%%%%%%%%%%%
%%%%%%%%%%%%%%%%%%%%%%%%%%%%%%%%%%%%%%%%%%%%%%%%%%%%%%%%%%%%%%%%%%%%%%%%%%%%%%%%%%%%%%%%%%%%%%%%%%%%%%%%%%%%%%%%%%
\begin{lemma}\label{lem:sub1}
Let $P\in\mathds{R}[X]$. Define a simplex $\triangle$ with vertices $v_0,v_1,v_2\in\mathds{R}^2$, and define the simplex $\tilde{\triangle}$ with vertices $v_0,\tilde{v}_1,v_2\in\mathds{R}^2$, where
\[
\tilde{v}_1=\beta_0 v_0+\beta_1v_1+\beta_2v_2
\]
and $\beta_0+\beta_1+\beta_2=1$, $\beta_0,\beta_2\geq0$, $\beta_1>0$. Then $P$ can be represented in two Bernstein bases as
\begin{align}
P&=\sum_{|\alpha|=d}b_\alpha\mathcal{B}_\alpha^d=\sum_{|\gamma|=d}\tilde{b}_\gamma\tilde{\mathcal{B}}_\gamma^d
\end{align}
where $\mathcal{B}_\alpha^d$ is the Bernstein basis defined by $\triangle$, $\tilde{\mathcal{B}}_\gamma^d$ is the Bernstein basis defined by $\tilde{\triangle}$, and
\begin{align*}
\tilde{b}_\gamma&=\sum_{\begin{matrix}|\alpha|=d\\\alpha_0\geq\gamma_0\\\alpha_2\geq\gamma_2\end{matrix}}\frac{\gamma_1!}{(\alpha_0-\gamma_0)!\alpha_1!(\alpha_2-\gamma_2)!}\beta_0^{\alpha_0-\gamma_0}\beta_1^{\alpha_1}\beta_2^{\alpha_2-\gamma_2}b_\alpha.
\end{align*}
\end{lemma}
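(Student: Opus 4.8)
The plan is to follow the template of the proof of Lemma~\ref{lem:sub2}: express the barycentric coordinates of $\triangle$ through those of $\tilde{\triangle}$, expand $\lambda^\alpha$ by the binomial theorem, rewrite the result in terms of the two Bernstein bases, and finally read off the coefficient transformation by matching multi-indices.

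First I would fix barycentric coordinates $\lambda_0,\lambda_1,\lambda_2$ for $\triangle$ and $\tilde{\lambda}_0,\tilde{\lambda}_1,\tilde{\lambda}_2$ for $\tilde{\triangle}$. Since only the middle vertex is moved, $\tilde{v}_0=v_0$, $\tilde{v}_2=v_2$, and $\tilde{v}_1=\beta_0v_0+\beta_1v_1+\beta_2v_2$, substituting into $x=\tilde{\lambda}_0\tilde{v}_0+\tilde{\lambda}_1\tilde{v}_1+\tilde{\lambda}_2\tilde{v}_2$ and collecting the coefficients of $v_0,v_1,v_2$, uniqueness of barycentric coordinates for $\triangle$ forces
\[
\lambda_0=\tilde{\lambda}_0+\beta_0\tilde{\lambda}_1,\qquad \lambda_1=\beta_1\tilde{\lambda}_1,\qquad \lambda_2=\tilde{\lambda}_2+\beta_2\tilde{\lambda}_1;
\]
these sum to $1$ because $\beta_0+\beta_1+\beta_2=1$, and $\beta_1>0$ guarantees that $\tilde{v}_1$ does not lie on the line through $v_0,v_2$, so $\tilde{\triangle}$ is nondegenerate and $\tilde{\lambda}_0,\tilde{\lambda}_1,\tilde{\lambda}_2$ are well defined.

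Next, for $|\alpha|=d$ this gives
\[
\lambda^\alpha=(\tilde{\lambda}_0+\beta_0\tilde{\lambda}_1)^{\alpha_0}(\beta_1\tilde{\lambda}_1)^{\alpha_1}(\tilde{\lambda}_2+\beta_2\tilde{\lambda}_1)^{\alpha_2},
\]
and expanding the first and third factors by the binomial theorem produces a double sum over $j\in\{0,\dots,\alpha_0\}$ and $\ell\in\{0,\dots,\alpha_2\}$ whose general term is a nonnegative constant times $\tilde{\lambda}_0^{\,\alpha_0-j}\tilde{\lambda}_1^{\,\alpha_1+j+\ell}\tilde{\lambda}_2^{\,\alpha_2-\ell}$. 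Translating monomials in the $\lambda_i$ and $\tilde{\lambda}_i$ into the basis polynomials via $\lambda^\alpha=\mathcal{B}_\alpha^d/\binom{d}{\alpha}$ and the analogous identity for $\tilde{\mathcal{B}}_\gamma^d$, I would substitute into $P=\sum_{|\alpha|=d}b_\alpha\mathcal{B}_\alpha^d$ and regroup by the tilde index $\gamma=(\alpha_0-j,\alpha_1+j+\ell,\alpha_2-\ell)$. The change of variables $j=\alpha_0-\gamma_0$, $\ell=\alpha_2-\gamma_2$ is then forced, the identity $\gamma_1=\alpha_1+j+\ell$ holds automatically because $|\alpha|=|\gamma|=d$, and the admissible range reduces to $\alpha_0\geq\gamma_0$, $\alpha_2\geq\gamma_2$. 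Uniqueness of the Bernstein representation of $P$ on $\tilde{\triangle}$ then identifies the coefficient of $\tilde{\mathcal{B}}_\gamma^d$ with $\tilde{b}_\gamma$, giving the stated formula once
\[
\frac{\binom{d}{\alpha}\binom{\alpha_0}{\alpha_0-\gamma_0}\binom{\alpha_2}{\alpha_2-\gamma_2}}{\binom{d}{\gamma}}=\frac{\gamma_1!}{(\alpha_0-\gamma_0)!\,\alpha_1!\,(\alpha_2-\gamma_2)!}
\]
is verified by writing out all factorials.

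The argument parallels that of Lemma~\ref{lem:sub2}, so the only delicate point is the index bookkeeping: here two of the new coordinates, $\tilde{\lambda}_0$ and $\tilde{\lambda}_2$, pick up contributions from $\tilde{\lambda}_1$, so a double binomial sum appears in place of a single one, and one must check that the pair $(j,\ell)$ and the multi-index $\gamma$ remain consistent with $|\alpha|=d$. The combinatorial identity displayed above is the main — and purely mechanical — obstacle; everything else is a routine adaptation of the earlier computation.
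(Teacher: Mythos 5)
Your proposal follows the same route as the paper's proof: express $\lambda_0,\lambda_1,\lambda_2$ through $\tilde{\lambda}_0,\tilde{\lambda}_1,\tilde{\lambda}_2$, expand $\lambda^\alpha$ with a double binomial sum, pass to the Bernstein bases, and match coefficients, with the stated factorial identity indeed reducing to $\gamma_1!/\bigl((\alpha_0-\gamma_0)!\,\alpha_1!\,(\alpha_2-\gamma_2)!\bigr)$ exactly as in the paper. The proposal is correct and essentially identical to the paper's argument, differing only in the harmless extra remarks on nondegeneracy and uniqueness of barycentric coordinates.
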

\begin{proof}
Let $\lambda_0,\lambda_1,\lambda_2\in\mathds{R}[X]$ be barycentric coordinates associated to $\triangle$ and $\tilde{\lambda}_0,\tilde{\lambda}_1,\tilde{\lambda}_2\in\mathds{R}[X]$ be barycentric coordinates associated to $\tilde{\triangle}$. Then
\begin{align*}
\lambda_0&=\tilde{\lambda}_0+\beta_0\tilde{\lambda}_1\\
\lambda_1&=\beta_1\tilde{\lambda}_1\\
\lambda_2&=\beta_2\tilde{\lambda}_1+\tilde{\lambda}_2
\end{align*}
and
\begin{align*}
\lambda^\alpha&=(\tilde{\lambda}_0+\beta_0\tilde{\lambda}_1)^{\alpha_0}(\beta_1\tilde{\lambda}_1)^{\alpha_1}(\beta_2\tilde{\lambda}_1+\tilde{\lambda}_2)^{\alpha_2}.
\end{align*}
By binomial theorem
\begin{align*}
\lambda^\alpha&=\left(\sum_{k=0}^{\alpha_0}\begin{pmatrix}\alpha_0\\k\end{pmatrix}\tilde{\lambda}_0^{\alpha_0-k}\beta_0^k\tilde{\lambda}_1^k\right)\beta_1^{\alpha_1}\tilde{\lambda}_1^{\alpha_1}\left(\sum_{j=0}^{\alpha_2}\begin{pmatrix}\alpha_2\\j\end{pmatrix}\beta_2^j\tilde{\lambda}_1^j\tilde{\lambda}_2^{\alpha_2-j}\right)\\
&=\sum_{\begin{matrix}|\gamma|=d\\\gamma_0\leq\alpha_0\\\gamma_2\leq\alpha_2\end{matrix}}\begin{pmatrix}\alpha_0\\\alpha_0-\gamma_0\end{pmatrix}\begin{pmatrix}\alpha_2\\\alpha_2-\gamma_2\end{pmatrix}\beta_0^{\alpha_0-\gamma_0}\beta_1^{\alpha_1}\beta_2^{\alpha_2-\gamma_2}\tilde{\lambda}^\gamma.
\end{align*}
Therefore, the Bernstein bases $\mathcal{B}_\gamma^d$ and $\tilde{\mathcal{B}}_\gamma^d$ are related as
\begin{align*}
\mathcal{B}_\alpha^d&=\sum_{\begin{matrix}|\gamma|=d\\\gamma_0\leq\alpha_0\\\gamma_2\leq\alpha_2\end{matrix}}\frac{\gamma_1!}{(\alpha_0-\gamma_0)!\alpha_1!(\alpha_2-\gamma_2)!}\beta_0^{\alpha_0-\gamma_0}\beta_1^{\alpha_1}\beta_2^{\alpha_2-\gamma_2}\tilde{\mathcal{B}}_\gamma^d,
\end{align*}
which implies that coefficients $b_\alpha$ and $\tilde{b}_\alpha$ are related as
\begin{align*}
\tilde{b}_\gamma&=\sum_{\begin{matrix}|\alpha|=d\\\alpha_0\geq\gamma_0\\\alpha_2\geq\gamma_2\end{matrix}}\frac{\gamma_1!}{(\alpha_0-\gamma_0)!\alpha_1!(\alpha_2-\gamma_2)!}\beta_0^{\alpha_0-\gamma_0}\beta_1^{\alpha_1}\beta_2^{\alpha_2-\gamma_2}b_\alpha.
\end{align*}
\end{proof}

\end{document}